\numberwithin{equation}{section}
\newtheorem{Theorem}{\sc Theorem}[section]
\newtheorem{Lemma}{\sc Lemma}[section]
\newtheorem{Proposition}{\sc Proposition}[section]
\newtheorem{Definition}{\sc Definition}[section]
\title[Some Existence Results on Cantor Sets]{Some Existence Results on Cantor Sets}
\author[B. \'Alvarez-Samaniego, W. P. \'Alvarez-Samaniego, J. Ortiz-Castro]{}
\date{October 31, 2016}
\email{balvarez@uce.edu.ec, balvarez@impa.br}
\email{alvarezwilson@hotmail.com} 
\email{jonathan.ortizc@epn.edu.ec} 
\begin{document}
\maketitle 

\centerline{\scshape Borys \'Alvarez-Samaniego}
{\footnotesize
 \centerline{N\'ucleo de Investigadores Cient\'{\i}ficos}
   \centerline{Facultad de Ingenier\'{\i}a, Ciencias F\'{\i}sicas y Matem\'atica}
   \centerline{Universidad Central del Ecuador (UCE)}
   \centerline{Quito, Ecuador}}

\vspace{0.5cm}	
	
\centerline{\scshape Wilson P. \'Alvarez-Samaniego}
{\footnotesize
 \centerline{N\'ucleo de Investigadores Cient\'{\i}ficos}
   \centerline{Facultad de Ingenier\'{\i}a, Ciencias F\'{\i}sicas y Matem\'atica}
   \centerline{Universidad Central del Ecuador (UCE)}
   \centerline{Quito, Ecuador}}

\vspace{0.5cm}

\centerline{\scshape Jonathan Ortiz-Castro}
{\footnotesize
   \centerline{Facultad de Ciencias}
   \centerline{Escuela Polit\'ecnica Nacional (EPN)}
   \centerline{Quito, Ecuador}}

%%%%%%%%%%%%%%%%%%%%%%%%%%%%%%%%%%%%%%%%%%%%

\vspace{1cm}
\selectlanguage{english}

\begin{abstract}{}
The existence of two different Cantor sets, one of them contained in 
the set of Liouville numbers and the other one inside the set of Diophantine 
numbers, is proved.  Finally, a necessary and sufficient condition for the existence 
of a Cantor set contained in a subset of the real line is given. 
\hfill \break \vspace{ 4 mm}\break {\it {MSC:}} 54A05; 54B05
\hfill \break \vspace{ 4 mm}\break {\it {Keywords:}} Cantor set; Liouville numbers; 
Diophantine numbers.
\end{abstract}

\vspace{1cm}

\maketitle
%%%%%%%%%%%%%%%%%%%%%%%%%%%%%%%%%%%%%%%%%%%%%%%%%%%%%%%%%%%%%%%%%%%%%%%%%
%  INTRODUCCIÓN
%%%%%%%%%%%%%%%%%%%%%%%%%%%%%%%%%%%%%%%%%%%%%%%%%%%%%%%%%%%%%%%%%%%%%%%%%

\section{Introduction}
First, we will introduce some basic topological concepts.
\begin{Definition}
A nowhere dense set $X$ in a topological space is a set whose closure has 
empty interior, i.e. $\text{int}(\overline{X})=\emptyset$.
\end{Definition}
\begin{Definition}
A nonempty set $C\subset \mathbb{R}$ is a Cantor set if $C$ is nowhere dense and perfect 
(i.e. $C=C'$, where $C':= \{p \in \mathbb{R}; \; p \text{ is an 
accumulation point of } C \}$ is the derived set of $C$).
\end{Definition}
\begin{Definition}
A condensation point $t$ of a subset $A$ of a topological space, is any point $t$, 
such that every open neighborhood of $t$ contains uncountably many points of $A$.
\end{Definition}
We denote by $\mathbb{Q}$ the set of rational numbers. The symbol $\mathbb{Z}$ 
is used to denote the set of integers, $\mathbb{Z}^+=\{1,2, \ldots \}$ denotes 
the positive integers and $\mathbb{N}=\{0,1,2, \ldots \}$ is the set of all natural 
numbers.  The cardinality of a set $B$ is denoted by $|B|$ .  
We denote by $\mathbf{OR}$, the class of all ordinal numbers.  Moreover, 
$\Omega$ represents the set of all countable ordinal numbers. 
\begin{Definition}\label{Liouville}
Let $\alpha \in \mathbb{R}\setminus \mathbb{Q}$. We say that $\alpha$ is a 
Liouville number if for all $n\in \mathbb{N}$, there exist 
integer numbers $p=p_n$ and $q=q_n$, such that $q>1$ and
\begin{equation}\label{Liouville_eq}
0<\left|\alpha-\frac{p}{q}\right|<\frac{1}{q^n}.
\end{equation}
If $\beta \in \mathbb{R}$ is not a Liouville number, we say that $\beta$ is a 
Diophantine number. The sets of Liouville and Diophantine numbers are, respectively,  
denoted by $\mathbb{L}$ and $\mathbb{D}$.
\end{Definition}
Joseph Liouville, by giving two different proofs, showed the existence of 
transcendental numbers for the first time in 1844 
(\cite{Liouville1844a, Liouville1844b}).  Later, in 1851 
(\cite{Liouville1851}) more detailed versions of these proofs were given.
It was also shown in \cite{Liouville1851} that the real number  
 \begin{equation*}
   \sum_{k=1}^{+\infty}\frac{1}{10^{k!}},
 \end{equation*}
which was already mentioned in \cite{Liouville1844a}, is a transcendental number. 
It is worth noting that the techniques used in 
\cite{Liouville1844a, Liouville1844b, Liouville1851}  
allow proving that all Liouville numbers are transcendental.

Some general properties of the set of Liouville numbers are:  $\mathbb{L}$ 
is a null set under the Lebesgue measure (i.e. $\lambda(\mathbb{L})=0$), it is a 
dense $G_{\delta}$ set in the real line, $\mathbb{L}$ is an uncountable set and, more 
specifically, it has the cardinality of the continuum.  Since the Lebesgue measure 
of the Diophantine numbers is infinity, it is an uncountable set.  From the fact that 
there exists a Cantor set in $\mathbb{D}$,  and since any 
uncountable closed set in $\mathbb{R}$ has the cardinality of the continuum, it 
follows that $\mathbb{D}$ has also the cardinality of the continuum. Moreover, in 
view of the density of $\mathbb{Q}$ in $\mathbb{R}$, the set of Diophantine numbers 
is also dense in the real line. In addition, $\mathbb{D}$ is a set of first category, 
i.e. it can be written as a countable union of nowhere dense subsets of $\mathbb{R}$.

Now, we would like to prove a basic result related to the fact that the property of 
being a Cantor set is preserved by homeomorphisms when the homeomorphic image of 
its domain is a closed subset of $\mathbb{R}$. 
%%%%%%%%%%%%%%%%%%%%%%%%%%%%%%%%%%%%%%%%%%%%%%%
%%%%%%%%%%%%%%%%%%%%%%%%%%%%%%%%%%%%%%%%%%%%%%%
\begin{Proposition}\label{P0}
Let $A,B\subset \mathbb{R}$.  Suppose that $f:A \longmapsto B$ is a homeomorphism
and $B$ is a closed subset of $\mathbb{R}$.  If $C\subset A$ is a Cantor set, 
then $f(C)\subset B$ is also a Cantor set.
\end{Proposition}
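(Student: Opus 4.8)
The plan is to verify directly the three defining features of a Cantor set for the image $f(C)$: that it is nonempty, perfect, and nowhere dense. Nonemptiness is immediate, since $C\neq\emptyset$ and $f$ is a bijection onto $B$. The first substantive step is to show that $f(C)$ is closed in $\mathbb{R}$. Because $C=C'$, the set $C$ contains all its accumulation points, hence is closed in $\mathbb{R}$; as $C\subset A$, this makes $C=C\cap A$ closed in the subspace $A$; since $f$ is a homeomorphism, $f(C)$ is closed in $B$; and finally, as $B$ is closed in $\mathbb{R}$, the set $f(C)$ is closed in $\mathbb{R}$. In particular $\overline{f(C)}=f(C)$, which already yields the inclusion $f(C)'\subset f(C)$ and reduces the nowhere-dense requirement to proving $\operatorname{int}(f(C))=\emptyset$.

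For the reverse inclusion $f(C)\subset f(C)'$ (i.e. $f(C)$ has no isolated points), I would take an arbitrary $y=f(x)\in f(C)$ with $x\in C$. Since $C$ is perfect, $x\in C'$, so there is a sequence $(x_k)$ in $C\setminus\{x\}$ with $x_k\to x$ in $\mathbb{R}$, hence in $A$. Continuity of $f$ gives $f(x_k)\to y$ in $B$, hence in $\mathbb{R}$; injectivity of $f$ gives $f(x_k)\neq y$; and clearly $f(x_k)\in f(C)$. Thus $y$ is an accumulation point of $f(C)$. Combined with the previous paragraph, this gives $f(C)=f(C)'$, so $f(C)$ is perfect.

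It remains to rule out $\operatorname{int}(f(C))\neq\emptyset$. Suppose, for contradiction, that some nonempty open interval $(a,b)$ is contained in $f(C)\subset B$. Then $(a,b)=(a,b)\cap B$ is open in $B$, so by continuity of $f$ the preimage $f^{-1}((a,b))$ is open in $A$, and since $(a,b)\subset f(C)$ we have $f^{-1}((a,b))\subset f^{-1}(f(C))=C$. On the other hand, $f^{-1}((a,b))$ is the image of the interval $(a,b)$ under the continuous injection $f^{-1}\colon B\to A$, hence it is a connected subset of $\mathbb{R}$ with more than one point, i.e. a nondegenerate interval, and therefore contains a nonempty open interval $J$. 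Then $J\subset C\subset\overline{C}$ with $J$ open forces $\operatorname{int}(\overline{C})\neq\emptyset$, contradicting the hypothesis that $C$ is nowhere dense. Hence $\operatorname{int}(f(C))=\operatorname{int}(\overline{f(C)})=\emptyset$, so $f(C)$ is nowhere dense, and therefore a Cantor set.

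The proof is essentially routine; the two points requiring care are the bookkeeping of which topology is in play at each stage (subspace topology of $A$, of $B$, and the ambient topology of $\mathbb{R}$) — this is precisely where the hypothesis that $B$ is closed in $\mathbb{R}$ enters, both to make $f(C)$ closed and to guarantee that an interval in $f(C)$ can be pulled back entirely — and the observation that, under the definition used here, a Cantor set need not be compact, so one cannot shortcut the closedness step via compactness. The genuinely substantive ingredient is the connectedness argument in the last paragraph, which transfers a nondegenerate interval back into $C$ and contradicts its being nowhere dense.
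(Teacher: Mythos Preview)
Your proof is correct and follows essentially the same route as the paper's: closedness of $f(C)$ via the chain ``closed in $\mathbb{R}$ $\Rightarrow$ closed in $A$ $\Rightarrow$ closed in $B$ $\Rightarrow$ closed in $\mathbb{R}$'', absence of isolated points via continuity plus injectivity of $f$, and empty interior by pulling back an open interval through $f^{-1}$ and invoking connectedness to produce an interval inside $C$. The only cosmetic differences are that you phrase the accumulation-point step with sequences rather than $\varepsilon$--$\delta$, and you conclude $\operatorname{int}(\overline{C})\neq\emptyset$ where the paper concludes $\operatorname{int}(C)\neq\emptyset$; both yield the same contradiction.
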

\begin{proof}
From the fact that $C \subset A$ is a perfect set, we get that $C$ is 
closed subset of $\mathbb{R}$.  Moreover, since $f^{-1}$ is continuous, 
we have that $(f^{-1})^{-1}(C)=f(C)$ is a closed set in $B$, and since 
$B$ is a closed subset of $\mathbb{R}$, it follows that $f(C)$ is closed in 
$\mathbb{R}$.  Now, we claim that $f(C)\subset (f(C))'$.  In fact, 
let $y\in f(C)$ and $\varepsilon>0$. So, there is $x\in C$ such that $y=f(x)$. 
Since $f$ is continuous at $x$, there exists $\delta>0$ such that
\begin{equation}\label{cont}
 \text{for all }z\in A, \quad |z-x|<\delta\Longrightarrow |f(z)-f(x)|<\varepsilon.
\end{equation}
Considering that $x \in C=C'$, there exists $z_0 \in C\subset A$ such that
$0<|z_0-x|<\delta$, and by (\ref{cont}) we deduce that $|f(z_0)-f(x)|<\varepsilon$.  
Furthermore, the injectivity of $f$ implies that $f(x)\neq f(z_0)$.  Then,  
$f(z_0)\in \left(B(y,\varepsilon)\setminus\{y\}\right)\cap f(C)$.  In 
consequence, $y\in (f(C))'$.  Hence, $f(C)$ is a perfect set in $\mathbb{R}$.  
On the other hand, since $C\neq \emptyset$, we see that $f(C)\neq \emptyset$. 
Finally, we will show that $\text{int}(f(C))=\emptyset$.  We suppose, by 
contradiction, that there exists $y\in\text{int}(f(C))$. Then, there is 
$r>0$ such that $(y-r,y+r)\subset f(C)$. Since $(y-r,y+r)$ is a connected 
set, we have that $f^{-1}((y-r,y+r))\subset C \subset A$ is also a connected set.
Let us take $u,v\in f^{-1}((y-r,y+r))$ such that $u<v$. Then, 
$(u,v) \subset f^{-1}((y-r,y+r))\subset C$.  Thus, 
$\text{int}(C) \neq \emptyset$, which is a contradiction. 
Hence, $\text{int}(f(C))=\emptyset$.  Since $f(C)$ is a nonempty perfect 
and nowhere dense set in $\mathbb{R}$, we conclude that $f(C)$ is a Cantor set.
\end{proof}
%%%%%%%%%%%%%%%%%%%%%%%%%%%%%%%%%%%%%%%%%%%%%%%
%%%%%%%%%%%%%%%%%%%%%%%%%%%%%%%%%%%%%%%%%%%%%%%
It is worth mentioning that ``every uncountable $G_\delta$ or $F_\sigma$ set 
in a Polish space contains a homeomorphic copy of the Cantor space'' 
(\cite{Kechris}).  In addition, P. S. Alexandroff (\cite{Alex}) showed that 
every uncountable Borel-measurable set contains a perfect set. 
By using these last facts, one can also obtain some of the 
results of this paper.   However, in Section \ref{SectionL}, we will 
mainly proceed in a different way, by constructing an uncountable perfect 
and nowhere dense subset of the Liouville numbers.  It deserves remark that 
Bendixson's Theorem, which states that every closed subset of the real line 
can be represented as a disjoint union of a perfect set and a countable set,  
is used in the proofs of the main results given in Sections  
\ref{SectionD} and \ref{SectionC}.

This paper is organized as follows. In Section \ref{SectionL}, the existence of 
a Cantor set in the set of Liouville numbers is proved.  Section \ref{SectionD} 
is devoted to show the existence of a Cantor set inside the set of 
Diophantine real numbers. Finally, in Section \ref{SectionC}, a necessary and 
sufficient condition for the existence of a Cantor set contained in 
a subset of $\mathbb{R}$ is given.

%%%%%%%%%%%%%%%%%%%%%%%%%%%%%%%%%%%%%%%%%%%%%%%%%%%%%%%%%%%%%%%%%%%%%%%%%
%
%%%%%%%%%%%%%%%%%%%%%%%%%%%%%%%%%%%%%%%%%%%%%%%%%%%%%%%%%%%%%%%%%%%%%%%%%
\section{Existence of a Cantor set contained in the Liouville Numbers} \label{SectionL}
We begin this section showing the existence of an uncountable closed set, $S$, 
contained in $\mathbb{L}$.  Then, we prove that $S$ is a perfect set.  Finally, since 
$S$ is a closed set and $\lambda(\mathbb{L})=0$, where $\lambda$ is the Lebesgue 
measure on the real line, we conclude that $S$ is also nowhere dense.

First, let us consider the following set 
\begin{equation}\label{eq:A}
   A=\{x=(x_n)_{n\in\mathbb{N}}\in \{0,1\}^{\mathbb{N}}: x_{2n}+x_{2n+1}=1, 
	 \quad\forall n\in\mathbb{N}\}.
\end{equation}
The next result concerns the cardinality of set $A$.
%%%%%%%%%%%%%%%%%%%%%%%%%%%%%%%%%%%%%%%%%%%%%%%
%%%%%%%%%%%%%%%%%%%%%%%%%%%%%%%%%%%%%%%%%%%%%%%
\begin{Lemma}\label{cardA}
The set $A$, given in (\ref{eq:A}), has the cardinality of the continuum.
\end{Lemma}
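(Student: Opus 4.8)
The plan is to exhibit an explicit bijection between $A$ and $\{0,1\}^{\mathbb{N}}$, which is well known to have the cardinality of the continuum. The key observation is that an element $x=(x_n)_{n\in\mathbb{N}}\in A$ is completely determined by the values of its even-indexed coordinates: the constraint $x_{2n}+x_{2n+1}=1$ forces $(x_{2n},x_{2n+1})$ to be either $(0,1)$ or $(1,0)$, so once $x_{2n}$ is chosen the coordinate $x_{2n+1}=1-x_{2n}$ is prescribed. This suggests defining $\varphi:\{0,1\}^{\mathbb{N}}\longmapsto A$ by $\varphi(y)=x$, where $x_{2n}=y_n$ and $x_{2n+1}=1-y_n$ for every $n\in\mathbb{N}$.

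First I would check that $\varphi$ is well-defined, i.e.\ that $\varphi(y)\in A$ for every $y\in\{0,1\}^{\mathbb{N}}$: indeed $x_{2n}+x_{2n+1}=y_n+(1-y_n)=1$ for all $n$, and each coordinate lies in $\{0,1\}$ because $y_n\in\{0,1\}$ implies $1-y_n\in\{0,1\}$. Next I would verify injectivity: if $\varphi(y)=\varphi(\tilde y)$, then comparing even-indexed coordinates gives $y_n=x_{2n}=\tilde x_{2n}=\tilde y_n$ for all $n$, so $y=\tilde y$. Finally, for surjectivity, given $x\in A$ define $y=(x_{2n})_{n\in\mathbb{N}}\in\{0,1\}^{\mathbb{N}}$; then $\varphi(y)$ has even coordinates $x_{2n}$ and odd coordinates $1-x_{2n}$, and the latter equal $x_{2n+1}$ precisely because $x_{2n}+x_{2n+1}=1$, so $\varphi(y)=x$.

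Having established that $\varphi$ is a bijection, it follows that $|A|=|\{0,1\}^{\mathbb{N}}|=2^{\aleph_0}$, which is the cardinality of the continuum, completing the proof. (Alternatively, one could note that $A\subset\{0,1\}^{\mathbb{N}}$ yields $|A|\le 2^{\aleph_0}$, combine it with the injection $y\mapsto\varphi(y)$ for the reverse inequality, and invoke the Cantor--Schr\"oder--Bernstein theorem; but the direct bijection above avoids this extra step.)

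There is no serious obstacle here: the argument is essentially a routine verification that a naturally defined coordinate map is a bijection. The only point requiring a modicum of care is making sure the map is genuinely well-defined into $A$ and genuinely onto $A$, i.e.\ that the constraint defining $A$ is exactly matched by the prescription $x_{2n+1}=1-x_{2n}$; this is immediate from the defining equation in \eqref{eq:A}.
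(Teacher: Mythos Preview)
Your proof is correct and follows essentially the same approach as the paper: both establish a bijection between $A$ and $\{0,1\}^{\mathbb{N}}$ by exploiting the fact that the constraint $x_{2n}+x_{2n+1}=1$ makes each pair $(x_{2n},x_{2n+1})$ determined by a single bit. The only cosmetic differences are that the paper defines the map in the direction $A\to\{0,1\}^{\mathbb{N}}$ (effectively reading off the odd-indexed coordinates) whereas you build the inverse map and read off the even-indexed ones, and your verification of bijectivity is more explicit than the paper's.
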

\begin{proof}
Let $f$ be the function given by 
\begin{equation*}
\begin{array}{cccc}
   f:&A  &\longmapsto & \{0,1\}^{\mathbb{N}}\\
	   &x  &\longmapsto     & y,
 \end{array}
\end{equation*}
where $y=(y_n)_{n\in\mathbb{N}}\in \{0,1\}^{\mathbb{N}}$ is defined by
\begin{equation*}
 y_n=\left\{
   \begin{array}{ll}
     1, & \hbox{if }x_{2n+1}=1, \\
     0, & \hbox{if }x_{2n}=1,
   \end{array}\right.
\end{equation*}
for all $n\in \mathbb{N}$. From the definition of function $f$, one gets 
that $f$ is bijective. Then, $|A|=|\{0,1\}^{\mathbb{N}}|$. Since 
$c = |\mathbb R|=|\{0,1\}^{\mathbb{N}}|$, we conclude that $A$ has the cardinality of 
the continuum. 
\end{proof}
%%%%%%%%%%%%%%%%%%%%%%%%%%%%%%%%%%%%%%%%%%%%%%%
%%%%%%%%%%%%%%%%%%%%%%%%%%%%%%%%%%%%%%%%%%%%%%%
Using (\ref{eq:A}), we define the set
\begin{equation}\label{eq:S}
  S=\left\{\sum_{n=1}^{+\infty}\frac{x_{n-1}}{10^{n!}}
	:x=(x_n)_{n\in\mathbb{N}}\in A\right\}.
\end{equation}
The following lemma will be used in the proof of Proposition \ref{P1}.
%%%%%%%%%%%%%%%%%%%%%%%%%%%%%%%%%%%%%%%%%%%%%%%
%%%%%%%%%%%%%%%%%%%%%%%%%%%%%%%%%%%%%%%%%%%%%%%
\begin{Lemma}\label{unicidad}
Let $z=(z_i)_{i\in\mathbb{Z}^+}$ be a sequence such that 
$z_i\in\{-1,0,1\}$ for all $i\in \mathbb{Z}^+$. If
\begin{equation*}
  \sum_{i=1}^{+\infty}\frac{z_i}{10^{i!}}=0,
\end{equation*}
then $z_i=0$ for all $i\in\mathbb{Z}^+$.
\end{Lemma}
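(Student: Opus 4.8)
The plan is to argue by contradiction via a "first nonzero digit" argument, exploiting the extremely rapid growth of the factorial exponents $i!$. Suppose not all $z_i$ vanish, and let $k \in \mathbb{Z}^+$ be the smallest index with $z_k \neq 0$; without loss of generality $z_k = 1$ (otherwise multiply the whole series by $-1$). Then from $\sum_{i=1}^{+\infty} z_i/10^{i!} = 0$ we isolate the leading term and obtain
\begin{equation*}
\frac{1}{10^{k!}} = -\sum_{i=k+1}^{+\infty}\frac{z_i}{10^{i!}}.
\end{equation*}
The strategy is to show the right-hand side is strictly smaller in absolute value than $10^{-k!}$, which is the desired contradiction.

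The key step is the tail estimate. Using $|z_i| \le 1$ and the monotonicity of $i \mapsto i!$, I would bound
\begin{equation*}
\left|\sum_{i=k+1}^{+\infty}\frac{z_i}{10^{i!}}\right| \le \sum_{i=k+1}^{+\infty}\frac{1}{10^{i!}} \le \sum_{m=(k+1)!}^{+\infty}\frac{1}{10^{m}} = \frac{1}{10^{(k+1)!}}\cdot\frac{10}{9} = \frac{10}{9}\cdot\frac{1}{10^{(k+1)!}},
\end{equation*}
where the middle inequality holds because the exponents $i!$ for $i \ge k+1$ form a strictly increasing sequence of integers all at least $(k+1)!$, so the sum is dominated by the full geometric series starting at exponent $(k+1)!$. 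Since $k \ge 1$, we have $(k+1)! \ge k! + 1$ (indeed $(k+1)! = (k+1)\cdot k! \ge 2\cdot k! = k! + k! \ge k! + 1$), hence $\frac{10}{9}\cdot 10^{-(k+1)!} \le \frac{10}{9}\cdot 10^{-k!-1} = \frac{1}{9}\cdot 10^{-k!} < 10^{-k!}$. This contradicts the displayed equality $10^{-k!} = |\sum_{i=k+1}^{+\infty} z_i/10^{i!}|$, so no such $k$ exists and $z_i = 0$ for all $i$.

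I expect the main (very mild) obstacle to be bookkeeping with the factorial gap: making rigorous that the exponents $\{i! : i \ge k+1\}$ are distinct integers bounded below by $(k+1)!$, so that the tail series is genuinely majorized term-by-term (after re-indexing) by the geometric series $\sum_{m \ge (k+1)!} 10^{-m}$, rather than a careless comparison that might only give a weaker bound. One must also handle the sign normalization cleanly — since the hypothesis is symmetric under $z \mapsto -z$, reducing to $z_k = 1$ loses no generality. Everything else is a routine geometric-series computation, and the strict inequality $\frac{1}{9} < 1$ closes the argument with room to spare.
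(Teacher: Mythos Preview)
Your proof is correct and is essentially the same as the paper's: both hinge on the tail estimate $\sum_{i\ge k+1}10^{-i!}\le \tfrac{10}{9}\,10^{-(k+1)!}<10^{-k!}$, the paper applying it inside a forward induction on $k$ while you package the same step as a minimal-counterexample argument. Your single inequality $(k+1)!\ge k!+1$ even lets you treat all $k\ge 1$ uniformly, whereas the paper handles $k=1$ separately before invoking the sharper gap $(k+1)!-k!=k\cdot k!$ for $k\ge 2$.
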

\begin{proof}
Since  
\begin{equation*}
   -\frac{z_1}{10}=\sum_{i=2}^{+\infty}\frac{z_i}{10^{i!}},
\end{equation*}
we see that
\begin{align*}
  \left|\frac{z_1}{10}\right| &=  \left|\sum_{i=2}^{+\infty}
	                                \frac{z_i}{10^{i!}}\right|
																	\leq \sum_{i=2}^{+\infty}\frac{|z_i|}{10^{i!}}
                                  \leq\sum_{i=2}^{+\infty}\frac{1}{10^{i!}} \\
															&<	\sum_{i=2!}^{+\infty}\frac{1}{10^{i}} 
															    =\frac{\frac{1}{10^2}}{1-\frac{1}{10}}
																	=\frac{1}{90}.
\end{align*}
Thus, $|z_1|<\frac{1}{9}<1$. Hence, we conclude that $z_1=0$.
\noindent
Now, we suppose, by induction, that for $n\in \mathbb{Z}^+$ 
such that $n\geq 2$, we have that $z_k=0$ for $k \in\{1,2,\ldots,n-1\}$. 
From the fact that
\begin{equation*}
  -\frac{z_n}{10^{n!}}=\sum_{i=n+1}^{+\infty}\frac{z_i}{10^{i!}},
\end{equation*}
we get
\begin{align*}
  \left|\frac{z_n}{10^{n!}}\right| &=\left|\sum_{i=n+1}^{+\infty}
	                                   \frac{z_i}{10^{i!}}\right|
	                                   \leq\sum_{i=n+1}^{+\infty}\frac{|z_i|}{10^{i!}}
																		 \leq\sum_{i=n+1}^{+\infty}\frac{1}{10^{i!}}  \\
                                   &<\sum_{i=(n+1)!}^{+\infty}\frac{1}{10^{i}}
																	  =\frac{\frac{1}{10^{(n+1)!}}}{1-\frac{1}{10}}
																		=\frac{10}{9} \cdot \frac{1}{10^{(n+1)!}}.
\end{align*}
Then, 
\begin{equation*}
   |z_n|<\frac{10}{9} \cdot \frac{10^{n!}}{10^{(n+1)!}}
	 =\frac{10}{9} \cdot 10^{n!(1-(n+1))}=\frac{10}{9} \cdot 10^{-n(n!)}
	 \leq \frac{10}{9} \cdot 10^{-4}=\frac{1}{9000}<1,
\end{equation*}
where we have used the fact that $-n(n!)\leq-4$ for $n\in \{2,3,\ldots\}$. Hence,
$|z_n|<1$, and thus we conclude that $z_n=0$.
\end{proof}
The next proposition shows that the set $S$ has the cardinality of the continuum. 
%%%%%%%%%%%%%%%%%%%%%%%%%%%%%%%%%%%%%%%%%%%%%%%
%%%%%%%%%%%%%%%%%%%%%%%%%%%%%%%%%%%%%%%%%%%%%%%
\begin{Proposition}\label{P1}
Let $S$ and $A$ be the sets respectively defined by (\ref{eq:S}) and (\ref{eq:A}).  
Then, $|S|=|A|=c$.
\end{Proposition}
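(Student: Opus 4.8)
The plan is to produce an explicit bijection between $A$ and $S$ and then invoke Lemma \ref{cardA}. Define the map
\begin{equation*}
  \begin{array}{cccc}
    \Phi: & A & \longmapsto & S \\
          & x=(x_n)_{n\in\mathbb{N}} & \longmapsto & \displaystyle\sum_{n=1}^{+\infty}\frac{x_{n-1}}{10^{n!}}.
  \end{array}
\end{equation*}
By the very definition of $S$ in (\ref{eq:S}), this map is well defined (the series is dominated by $\sum_{n\geq 1}10^{-n!}$, hence convergent) and surjective onto $S$. So the whole content of the proposition reduces to the injectivity of $\Phi$.

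For injectivity I would argue as follows. Suppose $x=(x_n)_{n\in\mathbb{N}}$ and $y=(y_n)_{n\in\mathbb{N}}$ are elements of $A$ with $\Phi(x)=\Phi(y)$. Subtracting the two series gives
\begin{equation*}
  \sum_{i=1}^{+\infty}\frac{x_{i-1}-y_{i-1}}{10^{i!}}=0.
\end{equation*}
Setting $z_i:=x_{i-1}-y_{i-1}$ for $i\in\mathbb{Z}^+$, and noting that $x_{i-1},y_{i-1}\in\{0,1\}$ forces $z_i\in\{-1,0,1\}$, the hypotheses of Lemma \ref{unicidad} are met, so $z_i=0$ for every $i\in\mathbb{Z}^+$; equivalently $x_{i-1}=y_{i-1}$ for all $i\in\mathbb{Z}^+$, i.e. $x=y$. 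Hence $\Phi$ is a bijection and $|S|=|A|$. Combining this with Lemma \ref{cardA}, which states $|A|=c$, yields $|S|=|A|=c$.

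I do not expect any real obstacle in this proof: the decomposition $S=\Phi(A)$ is essentially a restatement of the definition of $S$, and injectivity is an immediate consequence of the already-established Lemma \ref{unicidad}. The only point that deserves a little care is the bookkeeping of the index shift between the sequences in $A$, which are indexed starting from $0$, and the summation index $n$ in (\ref{eq:S}), which starts at $1$; keeping this straight is what lets one apply Lemma \ref{unicidad} verbatim to the difference sequence $(z_i)_{i\in\mathbb{Z}^+}$.
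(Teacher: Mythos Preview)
Your proof is correct and follows essentially the same approach as the paper: define the natural map from $A$ onto $S$, observe surjectivity from the definition of $S$, and deduce injectivity from Lemma~\ref{unicidad}, then invoke Lemma~\ref{cardA}. The only difference is that you spell out the application of Lemma~\ref{unicidad} to the difference sequence $(z_i)$ explicitly, whereas the paper simply cites the lemma.
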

\begin{proof}
Let $g$ be the function given by 
\begin{equation*}
\begin{array}{cccc}
   g:&A  &\longmapsto & S\\
	   &x=(x_n)_{n \in \mathbb{N}}  &
		 \longmapsto & \displaystyle \sum_{n=1}^{+\infty}\frac{x_{n-1}}{10^{n!}}.
 \end{array}
\end{equation*}
By the definition of function $g$, we see that $g$ is surjective. 
Moreover, by Lemma~\ref{unicidad}, it follows that $g$ is injective. Therefore, 
we conclude that $g$ is bijective.  Hence, $|S|=|A|=c$.
\end{proof}
The subsequent result states that $S$ is closed.
%%%%%%%%%%%%%%%%%%%%%%%%%%%%%%%%%%%%%%%%%%%%%%%
%%%%%%%%%%%%%%%%%%%%%%%%%%%%%%%%%%%%%%%%%%%%%%%
\begin{Proposition}\label{P2}
The set $S \subset \mathbb{R}$, given in (\ref{eq:S}), is a closed subset of $\mathbb{R}$.
\end{Proposition}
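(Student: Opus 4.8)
The plan is to show that $S$ contains all its limit points by proving that the image function $g:A\to S$ from Proposition~\ref{P1} is a homeomorphism onto its image when $A$ is given the subspace topology from $\{0,1\}^{\mathbb N}$, and then invoking the compactness of $A$. First I would observe that $A$, as defined in (\ref{eq:A}), is a closed subset of the compact product space $\{0,1\}^{\mathbb N}$: the constraint $x_{2n}+x_{2n+1}=1$ is a condition on finitely many coordinates for each $n$, so $A$ is an intersection of clopen sets, hence closed, hence compact. Thus it suffices to show $g$ is continuous, since a continuous image of a compact set in a Hausdorff space is compact, and compact subsets of $\mathbb R$ are closed.

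For continuity of $g$, the key estimate is that if two sequences $x,x'\in A$ agree on their first $N$ coordinates, then
\begin{equation*}
  \left|g(x)-g(x')\right|=\left|\sum_{n=1}^{+\infty}\frac{x_{n-1}-x'_{n-1}}{10^{n!}}\right|\leq\sum_{n=N+1}^{+\infty}\frac{1}{10^{n!}},
\end{equation*}
and the right-hand side tends to $0$ as $N\to\infty$ (it is bounded, as in the proof of Lemma~\ref{unicidad}, by a convergent geometric tail). This shows $g$ is (uniformly) continuous with respect to the standard metric on $\{0,1\}^{\mathbb N}$ that generates the product topology. An alternative, more hands-on route avoiding an explicit appeal to product-space compactness is to take a sequence $(s_k)_{k}$ in $S$ converging to some $t\in\mathbb R$, write $s_k=g(x^{(k)})$ with $x^{(k)}\in A$, extract via a diagonal argument a subsequence along which each coordinate $x^{(k)}_j$ stabilizes to some value $x_j\in\{0,1\}$, check that the limiting sequence $x=(x_j)$ still satisfies the defining relation of $A$ (again a finite-coordinate condition, so it passes to the limit), and conclude $t=g(x)\in S$ using the uniform tail estimate above to justify interchanging the limit with the series.

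The main obstacle is purely one of bookkeeping: making the tail estimate precise enough to conclude that agreement on a long enough initial segment forces the partial sums to be close, i.e. controlling $\sum_{n=N+1}^{+\infty}10^{-n!}$ explicitly, which is routine given the geometric-series bounds already established in Lemma~\ref{unicidad}. No genuinely new idea is needed beyond recognizing that the factorial-sparse base-$10$ expansion makes $g$ a well-behaved (indeed Lipschitz-type) map. I would therefore present the compactness argument as the clean version: $A$ closed in $\{0,1\}^{\mathbb N}$ hence compact, $g$ continuous by the tail estimate, so $S=g(A)$ is compact and therefore closed in $\mathbb R$.
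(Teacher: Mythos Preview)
Your argument is correct and takes a genuinely different route from the paper's. The paper proceeds entirely by hand: given a convergent sequence $(x^n)$ in $S$, it uses the Cauchy property together with tail estimates of the form $\sum_{i\ge k}10^{-i!}<\tfrac{10}{9}\cdot 10^{-k!}$ to show, by induction on the coordinate index $k$, that the digits $x^n_k$ eventually stabilize for the \emph{full} sequence (not merely along a subsequence); it then verifies that the limiting digit sequence lies in $A$ and that its image under $g$ equals the limit $y$. Your main approach instead packages the same tail estimate as continuity of $g$ and invokes compactness of $A\subset\{0,1\}^{\mathbb N}$ to conclude $S=g(A)$ is compact, hence closed. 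This is shorter and more conceptual, and it explains \emph{why} the result holds (continuous image of a compact set); the paper's version is more computational but entirely self-contained, requiring no familiarity with the product topology or the compactness of $\{0,1\}^{\mathbb N}$. Your ``alternative route'' via a diagonal subsequence is also valid and closer in spirit to the paper, though the paper's Cauchy argument is slightly sharper in that it shows coordinatewise stabilization without passing to a subsequence.
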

\begin{proof}
Let $y\in \mathbb{R}$ be such that there is a sequence 
$(x^n)_{n\in\mathbb{N}}$ in $S$ with $\displaystyle \lim_{n\to+\infty} x^n =y$.
So, for every $n\in\mathbb{N}$, we can write   
\begin{equation*}
   x^n=\sum_{i=1}^{+\infty}\frac{x^n_{i-1}}{10^{i!}},
\end{equation*}
where $(x^n_i)_{i\in\mathbb{N}} \in A$. Since $(x^n)_{n\in\mathbb{N}}$ is a 
convergent sequence, we have that $(x^n)_{n\in\mathbb{N}}$ is a Cauchy sequence. 
Thus, for $\varepsilon_0=\frac{1}{90}>0$, there is 
$N_0\in\mathbb{N}$ such that for all $m,n \in \mathbb{N}$, 
\begin{equation*}
    m,n\geq N_0 \Longrightarrow
   |x^m-x^n|=\left|\sum_{i=1}^{+\infty}
	           \frac{x^m_{i-1}-x^n_{i-1}}{10^{i!}}\right|<\varepsilon_0.
\end{equation*}
Then, for $m,n \in \mathbb{N}$ such that $m,n \geq N_0$, we get
\begin{align*}
    \frac{|x^m_0-x^n_0|}{10} & 
		\leq \left|\sum_{i=1}^{+\infty}\frac{x^m_{i-1}-x^n_{i-1}}{10^{i!}}\right|
		+\left|\sum_{i=2}^{+\infty}\frac{x^m_{i-1}-x^n_{i-1}}{10^{i!}}\right| \\
    & <\varepsilon_0 +\sum_{i=2}^{+\infty}\frac{|x^m_{i-1}-x^n_{i-1}|}{10^{i!}}
		  \leq \varepsilon_0+\sum_{i=2}^{+\infty}\frac{1}{10^{i!}}
			<\varepsilon_0 +\sum_{i=2!}^{+\infty}\frac{1}{10^{i}} \\
    & =\varepsilon_0+\frac{0.1^2}{1-0.1}
		  =\frac{1}{90}+\frac{1}{90}=\frac{1}{45}.
\end{align*}
Therefore, $x^n_0=x^m_0$. Hence, there is $x_0\in\{0,1\}$ such that 
$\displaystyle \lim_{n\to+\infty} x^n_0 = x_0$.
We proceed now by induction on $k$. Let $k\in\mathbb{Z}^+$. Let 
us suppose that for all $j \in\{0, \ldots,k-1\}$, there exist 
$\displaystyle \lim_{n\to+\infty} x^n_j =: x_j \in \{0,1\}$.  Using again the 
fact that $(x^n)_{n\in\mathbb{N}}$ is a Cauchy sequence, we see that for   
$\varepsilon_k=\frac{10}{9} \cdot \frac{1}{10^{(k+2)!}}>0$, there is 
$\widetilde{N}_{k-1}\in\mathbb{N}$ such that for $m,n \in \mathbb{N}$,
\begin{equation}\label{eq1}
   m,n\geq \widetilde{N}_{k-1} \Longrightarrow
   |x^m-x^n|=\left|\sum_{i=1}^{+\infty}\frac{x^m_{i-1}-x^n_{i-1}}{10^{i!}}\right|
	 <\varepsilon_k.
\end{equation}
Since for every $j\in \{0,1,\ldots, k-1\}$, there exist 
$\displaystyle \lim_{n\to+\infty} x^n_j = x_j \in \{0,1\}$ we have that 
there is $N_j\in\mathbb{N}$ such that for all $n \in \mathbb{N}$,
\begin{equation}\label{eq2}
  n \geq N_j \Longrightarrow x^n_j = x_j.
\end{equation}
Let $N_k:= \max \{N_0, N_1, \ldots, N_{k-1}, \widetilde{N}_{k-1} \} \in \mathbb{N}$.
Thus, for all $m, n \in \mathbb{N}$,
\begin{equation}\label{eq3}
  m,n \geq N_k \Longrightarrow 
	|x^m-x^n|=\left|\sum_{i=k+1}^{+\infty}\frac{x^m_{i-1}-x^n_{i-1}}{10^{i!}}\right|
	<\varepsilon_k,
\end{equation}
where in the last inequality we have used (\ref{eq1}) and (\ref{eq2}).  
Then, for all $m, n \in \mathbb{N}$,
\begin{align*}
  m,n \geq N_k \Longrightarrow 
  \frac{|x^m_k-x^n_k|}{10^{(k+1)!}}
	& \leq \left|\sum_{i=k+1}^{+\infty}\frac{x^m_{i-1}-x^n_{i-1}}{10^{i!}}\right|
	  +\left|\sum_{i=k+2}^{+\infty}\frac{x^m_{i-1}-x^n_{i-1}}{10^{i!}}\right| \\
  & <\varepsilon_k + \sum_{i=k+2}^{+\infty}\frac{|x^m_{i-1}-x^n_{i-1}|}{10^{i!}} \\
	& \leq \varepsilon_k +\sum_{i=k+2}^{+\infty}\frac{1}{10^ {i!}} 
	  <\varepsilon_k+\sum_{i=(k+2)!}^{+\infty}\frac{1}{10^{i}} \\
  &  =\varepsilon_k+\frac{0.1^{(k+2)!}}{1-0.1}
	  =\frac{10}{9} \cdot \frac{1}{10^{(k+2)!}}+\frac{10}{9} \cdot \frac{1}{10^{(k+2)!}} \\
	& =\frac{20}{9} \cdot \frac{1}{10^{(k+2)!}},
\end{align*}
where in the second inequality on the right-hand side of the implication above, 
we have used (\ref{eq3}). Thus, for all $m, n \in \mathbb{N}$,
\begin{align}\label{eq4}
  m,n \geq N_k & \Longrightarrow |x^m_k-x^n_k| 
	  < \frac{20}{9} \cdot \frac{10^{(k+1)!}}{10^{(k+2)!}} 
	  = \frac{20}{9} \cdot \frac{1}{10^{(k+1)!(k+1)}} 
    < \frac{20}{9 \cdot 10^4} <1 \nonumber \\
  & \Longrightarrow  x^m_k = x^n_k.
\end{align}
It follows from (\ref{eq4}) that there is $x_k\in\{0,1\}$ 
such that $\displaystyle \lim_{n\to+\infty} x^n_k = x_k$. By the principle 
of finite induction, we conclude that for all $l \in \mathbb{N}$, 
\begin{equation}\label{eq5}
   \displaystyle \lim_{n\to+\infty} x^n_l=:x_l \in \{0,1\}.
\end{equation}
Moreover, it follows from the last expression that for all $l \in \mathbb{N}$, 
there exists $N_l \in \mathbb{N}$ such that for all $n \in \mathbb{N}$,
\begin{equation}\label{eq4a}
  n \geq N_l  \Longrightarrow x^n_l = x_l.
\end{equation}	
\underline{Claim 1}: $x:=(x_i)_{i\in \mathbb{N}} \in A$. \\
Let $i \in \mathbb{N}$.  We see that for all $n \in \mathbb{N}$, 
\begin{equation*}
  x^n_{2i} + x^n_{2i+1} =1.
\end{equation*}
By using (\ref{eq5}) into the last expression we get
\begin{equation}\label{eq6}
   x_{2i} + x_{2i+1} = 1.
\end{equation}
By (\ref{eq5}), $x \in 2^{\mathbb{N}}$, and using (\ref{eq6}), we 
conclude that $x \in A$. \\
\underline{Claim 2}: $\displaystyle y=\sum_{i=1}^{+\infty}\frac{x_{i-1}}{10^{i!}}$. \\
In fact, let $\varepsilon>0$. Since 
$\displaystyle \sum_{i=1}^{\infty}\frac{1}{10^{i!}} < + \infty$, there is 
$a=a(\varepsilon)\in \{2,3, \ldots\}$ such that
\begin{equation}\label{eq7}
    \sum_{i=a}^{+\infty}\frac{1}{10^{i!}}<\varepsilon.
\end{equation}
By (\ref{eq4a}), there exists $P=P(\varepsilon) \in \mathbb{N}$, 
such that for all $n \in \mathbb{N}$,
\begin{equation}\label{eq8}
  n \geq P \Longrightarrow x^n_k =x_k \in \{0,1\}, 
	\forall k \in \{0,1, \ldots, a-2 \}.
\end{equation}
So, for all $n \in \mathbb{N}$,
\begin{align}\label{eq9}
  n \geq P \Longrightarrow  
	\left|x^n - \sum_{i=1}^{+\infty}\frac{x_{i-1}}{10^{i!}}\right| 
	& = \left|\sum_{i=1}^{+\infty}\frac{x^n_{i-1}}{10^{i!}}
	    -\sum_{i=1}^{+\infty}\frac{x_{i-1}}{10^{i!}}\right| \nonumber \\
	&	=	\left|\sum_{i=1}^{a-1}\frac{x^n_{i-1}-x_{i-1}}{10^{i!}}
	    -\sum_{i=a}^{+\infty}\frac{x^n_{i-1}-x_{i-1}}{10^{i!}}\right| \nonumber \\
	& =	\left|\sum_{i=a}^{+\infty}\frac{x^n_{i-1}-x_{i-1}}{10^{i!}}\right| \nonumber \\	
	& \leq \sum_{i=a}^{+\infty}\frac{|x^n_{i-1}-x_{i-1}|}{10^{i!}} 
	  \leq \sum_{i=a}^{+\infty}\frac{1}{10^{i!}} < \varepsilon,	
\end{align}
where in the third equality above we have used (\ref{eq8}) and in the 
last inequality we have used (\ref{eq7}).  Then, 
\begin{equation*}
  \lim_{n \to + \infty} x^n 
	= \sum_{i=1}^{+\infty}\frac{x_{i-1}}{10^{i!}}. 
\end{equation*}
By the uniqueness of the limit in the real line, we conclude that 
$y= \displaystyle \sum_{i=1}^{+\infty}\frac{x_{i-1}}{10^{i!}}$.  \\
Using Claims 1 and 2, we obtain that $y\in S$.  Hence, 
$S$ is a closed subset of $\mathbb{R}$.
\end{proof}
By using a standard proof, we now show the following proposition.
%%%%%%%%%%%%%%%%%%%%%%%%%%%%%%%%%%%%%%%%%%%%%%%
%%%%%%%%%%%%%%%%%%%%%%%%%%%%%%%%%%%%%%%%%%%%%%%
\begin{Proposition}\label{P3}
The set $S$, given in (\ref{eq:S}), is contained in the set of Liouville numbers, 
more precisely 
\begin{equation}\label{subset}
   S\subset (0,1) \cap \mathbb{L} \subset \mathbb{L}.
\end{equation}
\end{Proposition}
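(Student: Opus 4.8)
The plan is to fix an arbitrary element $\alpha\in S$, write $\alpha=\sum_{n=1}^{+\infty}\frac{x_{n-1}}{10^{n!}}$ with $x=(x_n)_{n\in\mathbb{N}}\in A$, and verify the three requirements: $\alpha\in(0,1)$, $\alpha\notin\mathbb{Q}$, and the Liouville inequality (\ref{Liouville_eq}) for every $n\in\mathbb{N}$ (the latter two together say $\alpha\in\mathbb{L}$, by Definition~\ref{Liouville}). Throughout I would exploit the structure of $A$: since $x_{2m}+x_{2m+1}=1$ for every $m\in\mathbb{N}$, exactly one entry of each pair $\{x_{2m},x_{2m+1}\}$ equals $1$, so infinitely many of the $x_i$ equal $1$ — equivalently, every tail $\sum_{k\ge m+1}\frac{x_{k-1}}{10^{k!}}$ is strictly positive.

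First I would dispatch the elementary bounds. Because $x_0+x_1=1$, at least one term of the defining series is nonzero, so $\alpha>0$; and since $0\le x_{n-1}\le1$, one has $\alpha\le\sum_{n=1}^{+\infty}10^{-n!}<\sum_{j=1}^{+\infty}10^{-j}=\tfrac19<1$. Hence $S\subset(0,1)$, which is the middle inclusion of (\ref{subset}).

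The main point is the Liouville property. Given $n\in\mathbb{N}$, I would set $m:=n+1$, $q:=q_n:=10^{m!}$ and $p:=p_n:=\sum_{k=1}^{m}x_{k-1}\,10^{m!-k!}$. Since $k!\le m!$ for $1\le k\le m$, $p$ is a nonnegative integer, and $q=10^{(n+1)!}>1$. Then $\alpha-\frac pq=\sum_{k=m+1}^{+\infty}\frac{x_{k-1}}{10^{k!}}$, which is strictly positive by the remark above and satisfies $\alpha-\frac pq\le\sum_{k=m+1}^{+\infty}10^{-k!}<\sum_{j=(m+1)!}^{+\infty}10^{-j}=\tfrac{10}{9}\,10^{-(m+1)!}$. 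It then remains to check $\tfrac{10}{9}\,10^{-(m+1)!}\le 10^{-n\,m!}=q^{-n}$, which is equivalent to $\tfrac{10}{9}\le 10^{(m+1)!-n\,m!}=10^{m!(m+1-n)}=10^{2(n+1)!}\ge 100$, and this holds. Combining, $0<\bigl|\alpha-\frac pq\bigr|<\frac1{q^n}$, which is (\ref{Liouville_eq}).

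Finally, for irrationality I would observe that the given series is precisely the decimal expansion of $\alpha$ (there are no carries, as all digits are $0$ or $1$), with the digit in place $10^{-k!}$ equal to $x_{k-1}$ and all other digits $0$. Since the gaps $(k+1)!-k!$ between consecutive potentially-nonzero positions tend to infinity while infinitely many digits equal $1$, this expansion is not eventually periodic, so $\alpha\notin\mathbb{Q}$; alternatively one deduces this directly from the approximation just established, since $|aq-bp|\ge1$ whenever $\alpha=a/b$ with $aq\ne bp$. Putting the three facts together yields $\alpha\in(0,1)\cap\mathbb{L}$, hence (\ref{subset}). I do not expect a real obstacle here — this is the ``standard'' argument alluded to in the statement — the only points needing a little care are guaranteeing the tail $\alpha-\frac pq$ is strictly (not just weakly) positive, which is exactly where the defining relation of $A$ is used, and keeping the factorial-exponent bookkeeping correct.
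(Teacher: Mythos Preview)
Your argument is correct and follows the same overall route as the paper: truncate the series at the $m$-th term, take $q=10^{m!}$, and bound the tail by a geometric series to obtain the Liouville inequality. The only cosmetic difference in that part is your choice $m=n+1$ (so $q=10^{(n+1)!}$) versus the paper's $q_n=10^{n!}$; both work.

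The one substantive divergence is the irrationality step. The paper argues by contradiction: assuming $y=p/q$, it picks $m$ with $q<10^{m!\cdot m-1}$ and shows that $q\cdot 10^{m!}\sum_{k\ge m+1}x_{k-1}10^{-k!}$ would then be a positive integer strictly less than $1$. You instead observe that the series \emph{is} the decimal expansion of $\alpha$, with $1$'s at positions $k!$ separated by gaps $k\cdot k!\to\infty$, hence not eventually periodic. Your route is shorter and arguably cleaner; the paper's is more self-contained (it does not rely on the characterization of rationals via periodic expansions). You also make explicit, via the defining relation of $A$, that every tail is strictly positive, which is exactly what guarantees $|\alpha-p/q|>0$; the paper leaves this implicit and recovers it only after proving irrationality.
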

\begin{proof}
Let $y \in S$.  Thus, there exists $(x_n)_{n \in \mathbb{N}} \in A$ such that 
$y= \displaystyle \sum_{i=1}^{+\infty}\frac{x_{i-1}}{10^{i!}}$.  Then,
\begin{equation*}
  0 < y = \sum_{i=1}^{+\infty}\frac{x_{i-1}}{10^{i!}} 
	  < \sum_{i=1}^{+\infty}\frac{1}{10^{i!}}
		< \sum_{i=1}^{+\infty}\frac{1}{10^i} 
		=\frac{1}{9} < 1.
\end{equation*}
We now consider $n \in \mathbb{Z}^+$.  We define $q_n, p_n \in \mathbb{Z}$ 
as follows
\begin{equation*}
  q_n := 10 ^{n!} > 1   \quad \text{ and }  \quad 
	p_n := q_n \cdot \sum_{k=1}^n \frac{x_{k-1}}{10^{k!}}.
 \end{equation*}
Therefore,
\begin{align*}
  \left| y - \frac{p_n}{q_n} \right| 
	& = \sum_{k=n+1}^{+\infty}  \frac{x_{k-1}}{10^{k!}} 
	  < \sum_{k=n+1}^{+\infty}  \frac{1}{10^{k!}}      \\
	& <   \sum_{k=(n+1)!}^{+\infty}  \frac{1}{10^k}  
	  = \frac{1}{10^{(n+1)!}} \cdot \sum_{k=0}^{+\infty}  \frac{1}{10^k} \\
	& = \frac{1}{10^{(n+1)!}} \cdot \frac{10}{9} < \frac{10}{10^{(n+1)!}} \\
	& \le \frac{10^{n!}}{10^{(n+1)!}} = \frac{1}{10^{n!(n+1) -n!}} 
	  = \frac{1}{10^{n! \cdot n}} = \frac{1}{q_n^n}.
\end{align*}
For the sake of completeness, we will show here that 
$y \in \mathbb{R} \setminus \mathbb{Q}$.  In fact, we suppose, by contradiction, 
that there are $p, q \in \mathbb{Z}^+$ such that 
$\frac{p}{q} = y = \displaystyle \sum_{i=1}^{+\infty}\frac{x_{i-1}}{10^{i!}}$.  
Since $y \in (0,1)$, we see that $0<p<q$.  Then, 
$p \in \{1, 2, \ldots, q-1 \}$.  Moreover, there is $m \in \mathbb{Z}^+$ 
such that 
\begin{equation}\label{eq10}
  q < 10^{m! \cdot m -1}.
\end{equation}	
Furthermore, the expression    
$\frac{p}{q} = \displaystyle \sum_{i=1}^{+\infty}\frac{x_{i-1}}{10^{i!}}$
is equivalent to 
\begin{equation}\label{eq11}
  p \cdot 10^{m!} = q \cdot \sum_{k=1}^m x_{k-1}  10^{m!-k!} 
	+ q \cdot 10^{m!} \cdot \sum_{k=m+1}^{+\infty} \frac{x_{k-1}}{10^{k!}}.
\end{equation}
Using (\ref{eq11}) we see that 
$\left( \displaystyle q \cdot 10^{m!} \cdot \sum_{k=m+1}^{+\infty} 
\frac{x_{k-1}}{10^{k!}} \right)  \in \mathbb{Z}^+$.  Then,
\begin{align*}
 1 & \le q \cdot 10^{m!} \cdot \sum_{k=m+1}^{+\infty} \frac{x_{k-1}}{10^{k!}} 
     < q \cdot 10^{m!}\cdot \sum_{k=m+1}^{+\infty} \frac{1}{10^{k!}} \\
	 & < q \cdot 10^{m!}\cdot \sum_{k=(m+1)!}^{+\infty} \frac{1}{10^k}
	   = \frac{q \cdot 10^{m!}}{10^{(m+1)!}} \cdot \sum_{k=0}^{+\infty} \frac{1}{10^k} \\
	 & = \frac{q}{10^{m! \cdot m}} \cdot \frac{10}{9} 
	   < \frac{q}{10^{m! \cdot m-1}} <1,
\end{align*}
where in the last inequality we have used (\ref{eq10}).  Last expression 
shows that the assumption $y \in \mathbb{Q}$ leads to a contradiction.  Hence, 
$S\subset (0,1) \cap \mathbb{L} \subset \mathbb{L}$.
\end{proof}
The succeeding result says that the set $S$ is equal to its set of 
accumulation points.
%%%%%%%%%%%%%%%%%%%%%%%%%%%%%%%%%%%%%%%%%%%%%%%
%%%%%%%%%%%%%%%%%%%%%%%%%%%%%%%%%%%%%%%%%%%%%%%
\begin{Proposition}\label{P4}
The set $S$, given in (\ref{eq:S}), is a perfect set, i.e. $S=S'$.
\end{Proposition}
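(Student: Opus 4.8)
The plan is to exploit that $S$ is already known to be closed (Proposition \ref{P2}), so that $S' \subseteq \overline{S} = S$ comes for free; the entire task is then to establish the reverse inclusion $S \subseteq S'$, i.e. that no point of $S$ is isolated.

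To do this, I would fix $y \in S$ and $\varepsilon > 0$, write $y = \sum_{i=1}^{+\infty} x_{i-1}/10^{i!}$ with $x = (x_n)_{n\in\mathbb{N}} \in A$, and produce a nearby but distinct point of $S$ by a localized perturbation of $x$ far out in the sequence. Concretely, I would choose $n$ large enough that $2 \cdot 10^{-(2n+1)!} < \varepsilon$ and define $\tilde x$ by swapping the two entries of the $n$-th constrained block — $\tilde x_{2n} := x_{2n+1}$, $\tilde x_{2n+1} := x_{2n}$ — and leaving every other entry unchanged. Swapping an entire block is precisely what keeps us inside $A$: each constraint $\tilde x_{2m} + \tilde x_{2m+1} = 1$ still holds, so $\tilde x \in A$ and hence $\tilde y := \sum_{i=1}^{+\infty} \tilde x_{i-1}/10^{i!} \in S$. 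Since $\tilde x$ and $x$ differ only at the indices $2n$ and $2n+1$, which sit in the terms with $i = 2n+1$ and $i = 2n+2$, one obtains at once
\[
  |\tilde y - y| \le \frac{1}{10^{(2n+1)!}} + \frac{1}{10^{(2n+2)!}} < \frac{2}{10^{(2n+1)!}} < \varepsilon .
\]

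The remaining point is to check that $\tilde y \neq y$, and this is exactly where Lemma \ref{unicidad} enters: setting $z_i := \tilde x_{i-1} - x_{i-1} \in \{-1,0,1\}$, we have $\tilde y - y = \sum_{i=1}^{+\infty} z_i/10^{i!}$ with $z_{2n+1} = \tilde x_{2n} - x_{2n} \neq 0$ (precisely one of $x_{2n}, x_{2n+1}$ equals $1$), so by Lemma \ref{unicidad} the series cannot vanish. Thus $\tilde y \in \big(S \cap (y-\varepsilon,\, y+\varepsilon)\big) \setminus \{y\}$, hence $y \in S'$, and letting $y$ range over $S$ gives $S \subseteq S'$, whence $S = S'$. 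I do not anticipate a genuine obstacle here; the only things to watch are the off-by-one shift between the digit $x_{n-1}$ and the weight $10^{n!}$ (so as to locate correctly which terms move) and the need to perturb a whole constrained pair rather than a single coordinate so as not to leave $A$ — with the actual distinctness of the new point being handed to us by Lemma \ref{unicidad}.
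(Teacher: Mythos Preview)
Your proposal is correct and follows essentially the same approach as the paper: both fix a point of $S$, perturb the single constrained block $(x_{2n},x_{2n+1})$ far out (noting that since $x_{2n}+x_{2n+1}=1$, your swap $\tilde x_{2n}=x_{2n+1},\ \tilde x_{2n+1}=x_{2n}$ is literally the paper's replacement $y_i=1-x_i$ on that block), verify membership in $A$, and bound $|\tilde y-y|$ by $10^{-(2n+1)!}+10^{-(2n+2)!}$. The only cosmetic difference is that the paper reads off $\tilde y\neq y$ directly from the explicit two-term difference, whereas you invoke Lemma~\ref{unicidad}; both are fine.
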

\begin{proof}
Since $S$ is closed, it is enough to show that every element of $S$ is 
an accumulation point of $S$.  In order to prove the last assertion, let $a \in S$,
and $\varepsilon > 0$.  We will show that there exists $b \in S$ such 
that $0 < |a-b|<\varepsilon$.  We take $N \in \mathbb{N}$ satisfying  
$N > \frac{1}{2} \cdot \log_{10} \left(\frac{2}{\varepsilon} \right)$.  
Since $a \in S$, there is $(x_n)_{n \in \mathbb{N}} \in A$ such that 
$a= \displaystyle \sum_{i=1}^{+\infty}\frac{x_{i-1}}{10^{i!}}$.  For all 
$i \in \mathbb{N}$, we define
\begin{equation}\label{eq12}
 y_i:=\left\{
   \begin{array}{ll}
     1-x_i, & \hbox{if } (2N-i)(2N+1-i) = 0, \\
     x_i, & \hbox{otherwise}.
   \end{array}\right.
\end{equation}
Since $(x_n)_{n \in \mathbb{N}} \in \{0,1\}^{\mathbb{N}}$, it follows 
directly from (\ref{eq12}) that  $(y_n)_{n \in \mathbb{N}} \in \{0,1\}^{\mathbb{N}}$.  
We will now show that for all $i \in \mathbb{N}$, $y_{2i} + y_{2i+1} =1$.  In fact, 
if $i=N$, then $y_{2i} + y_{2i+1} = 1-x_{2i}+1-x_{2i+1}=1+1-1=1$.  On the 
other hand, if $i \neq N$, then  $y_{2i} + y_{2i+1} = x_{2i} + x_{2i+1} =1$.  
Thus, $(y_n)_{n \in \mathbb{N}} \in A$, and therefore  
\begin{equation}\label{eq13}
  b:= \sum_{i=1}^{+\infty} \frac{y_{i-1}}{10^{i!}} \in S.
\end{equation}
In addition,
\begin{align*}
 0< |a-b| & = \left| \sum_{i=1}^{+\infty} \frac{x_{i-1}}{10^{i!}} 
           - \sum_{i=1}^{+\infty} \frac{y_{i-1}}{10^{i!}} \right| 
				 = \left|  \sum_{i=1}^{+\infty} \frac{x_{i-1}-y_{i-1}}{10^{i!}} \right| \\
			 & = \left|  \sum_{i=0}^{+\infty} \frac{x_i-y_i}{10^{(i+1)!}} \right| 
			   = \left| \frac{x_{2N}-y_{2N}}{10^{(2N+1)!}} 
			     + \frac{x_{2N+1}-y_{2N+1}}{10^{(2N+2)!}} \right|	 \\
			 & = \left| \frac{x_{2N}-1 +x_{2N}}{10^{(2N+1)!}} 
			     + \frac{x_{2N+1}-1 + x_{2N+1}}{10^{(2N+2)!}} \right|	\\
			 & = \left| \frac{2x_{2N}-1}{10^{(2N+1)!}} 
		     + \frac{2x_{2N+1}-1}{10^{(2N+2)!}}\right|		
			  \le  \frac{|2x_{2N}-1|}{10^{(2N+1)!}} + \frac{|2x_{2N+1}-1|}{10^{(2N+2)!}} \\
			 & = \frac{1}{10^{(2N+1)!}}  + \frac{1}{10^{(2N+2)!}}	
			  \le \frac{1}{10^{(2N)!}}  + \frac{1}{10^{(2N)!}}	\\
			& \le \frac{2}{10^{2N}} < \varepsilon,
\end{align*}
where the last equality is a consequence of the fact that for all  $z \in \{0,1\}, |2z-1|=1$.  
This concludes the proof of the proposition.
\end{proof}
We now proceed to prove the key theorem of this section.
%%%%%%%%%%%%%%%%%%%%%%%%%%%%%%%%%%%%%%%%%%%%%%%
%%%%%%%%%%%%%%%%%%%%%%%%%%%%%%%%%%%%%%%%%%%%%%%
\begin{Theorem}\label{ThL}
The set $S$, given in (\ref{eq:S}), is a Cantor set contained in the set of 
Liouville numbers.
\end{Theorem}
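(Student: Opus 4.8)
The plan is to assemble the Cantor set property of $S$ from the results already established in this section, the only genuinely new ingredient being that $S$ is nowhere dense. Recall that to verify $S$ is a Cantor set we must check three things: that $S$ is nonempty, that $S$ is perfect, and that $S$ is nowhere dense in $\mathbb{R}$; the inclusion $S \subset \mathbb{L}$ is then exactly the content of Proposition \ref{P3}.

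First, nonemptiness and perfectness are immediate from the preparatory work: by Proposition \ref{P1} we have $|S| = c \neq 0$, so $S \neq \emptyset$, and by Proposition \ref{P4} the set $S$ satisfies $S = S'$, i.e. $S$ is a perfect set.

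Next, for nowhere density I would argue via Lebesgue measure. By Proposition \ref{P3}, $S \subset \mathbb{L}$, and since $\lambda(\mathbb{L}) = 0$ (the standard fact recalled in the Introduction), monotonicity of $\lambda$ gives $\lambda(S) = 0$. On the other hand, Proposition \ref{P2} says $S$ is closed, so $\overline{S} = S$ and hence $\lambda(\overline{S}) = 0$. If $\mathrm{int}(\overline{S})$ were nonempty it would contain a nondegenerate open interval, which has strictly positive Lebesgue measure, contradicting $\lambda(\overline{S}) = 0$. Therefore $\mathrm{int}(\overline{S}) = \emptyset$, i.e. $S$ is nowhere dense. (A measure-free alternative is available: since $S \subset \mathbb{L} \subset \mathbb{R}\setminus\mathbb{Q}$ and every nonempty open interval meets $\mathbb{Q}$, the closed set $S$ contains no such interval, so again $\mathrm{int}(\overline{S}) = \mathrm{int}(S) = \emptyset$.)

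Combining the three items, $S$ is a nonempty, perfect, nowhere dense subset of $\mathbb{R}$, hence a Cantor set, and $S \subset \mathbb{L}$ by Proposition \ref{P3}, which is the assertion of the theorem. I do not expect any real obstacle here: the statement is a synthesis of Propositions \ref{P1}, \ref{P2}, \ref{P3}, and \ref{P4}, and the one step not literally quoted from them — nowhere density — reduces to the elementary observation that a closed null set (or a closed set disjoint from $\mathbb{Q}$) has empty interior.
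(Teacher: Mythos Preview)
Your proof is correct and follows essentially the same route as the paper: it assembles Propositions \ref{P1}, \ref{P2}, \ref{P3}, and \ref{P4}, and obtains nowhere density from $S\subset\mathbb{L}$, $\lambda(\mathbb{L})=0$, and closedness of $S$. The only addition is your optional measure-free argument via $S\cap\mathbb{Q}=\emptyset$, which the paper does not mention.
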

\begin{proof}
Let $S$ be the set given by (\ref{eq:S}).  By Propositions \ref{P1}, \ref{P3} 
and \ref{P4}, $S$ is an uncountable perfect set contained in $\mathbb{L}$.  Moreover, 
by Proposition \ref{P2}, $S$ is a closed subset of $\mathbb{R}$, since  
$\lambda(\mathbb{L})=0$ and $S\subset \mathbb{L}$, we have that $S$ is a nowhere 
dense subset of $\mathbb{R}$. We may therefore conclude that $S$ is a Cantor 
set such that $S\subset \mathbb{L}$.
\end{proof}
Before ending this section, we state an important definition and a 
lemma that we will use in the proof of Proposition \ref{P5} below.  
%%%%%%%%%%%%%%%%%%%%%%%%%%%%%%%%%%%%%%%%%%%%%%%
%%%%%%%%%%%%%%%%%%%%%%%%%%%%%%%%%%%%%%%%%%%%%%%
\begin{Definition}[Cantor-Bendixson's derivative]\label{def:D_CB}
Let $A$ be a subset of a topological space. For a given ordinal number 
$\alpha \in \mathbf{OR}$, we define, using transfinite recursion, the  
$\alpha$-th derivative of $A$, written $A^{(\alpha)}$, as follows:
\begin{itemize}
  \item $A^{(0)}=A$,
  \item $A^{(\beta+1)}=(A^{(\beta)})'$, for all ordinal $\beta$,
  \item $\displaystyle A^{(\lambda)}=\bigcap_{\gamma<\lambda} A^{(\gamma)}$, 
	for all limit ordinal $\lambda\neq 0$.
\end{itemize}
\end{Definition}
The next lemma and its proof can be found in \cite[Lemma 2.1]{BAS-AM}.
%%%%%%%%%%%%%%%%%%%%%%%%%%%%%%%%%%%%%%%%%%%%%%%
%%%%%%%%%%%%%%%%%%%%%%%%%%%%%%%%%%%%%%%%%%%%%%%
\begin{Lemma}\label{derivative}
Suppose that $n\in \mathbb{Z}^+$. Let $F_1, F_2, \ldots, F_n$ be closed subsets 
of $\mathbb{R}$.  Then, for all ordinal number $\alpha \in \mathbf{OR}$, 
we have that 
\begin{equation*}
	\left(\bigcup_{k=1}^n F_k\right)^{(\alpha)} = \bigcup_{k=1}^n F_k^{(\alpha)}.
\end{equation*}
\end{Lemma}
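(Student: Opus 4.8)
\textbf{Proof proposal for Lemma~\ref{derivative}.}
The plan is to argue by transfinite induction on the ordinal $\alpha$, treating the three cases of Definition~\ref{def:D_CB} (zero, successor, limit) in turn. First I would record the base case $\alpha=0$: here both sides equal $\bigcup_{k=1}^n F_k$ by definition of $A^{(0)}$, so there is nothing to prove. The whole argument reduces to the successor step, since the limit step will follow formally from commuting an intersection over $\gamma<\lambda$ with a finite union.

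For the successor step, assume $\left(\bigcup_{k=1}^n F_k\right)^{(\alpha)} = \bigcup_{k=1}^n F_k^{(\alpha)}$; I must show the derived set of this common value equals $\bigcup_{k=1}^n F_k^{(\alpha+1)} = \bigcup_{k=1}^n \bigl(F_k^{(\alpha)}\bigr)'$. So it suffices to prove the purely topological fact that for finitely many sets $G_1,\dots,G_n$ one has $\bigl(\bigcup_{k=1}^n G_k\bigr)' = \bigcup_{k=1}^n G_k'$, applied to $G_k=F_k^{(\alpha)}$. The inclusion $\supseteq$ is immediate and holds for arbitrary unions: any accumulation point of $G_k$ is an accumulation point of the larger set $\bigcup G_k$. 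For $\subseteq$, I would argue by contrapositive: if $x\notin\bigcup_{k=1}^n G_k'$, then for each $k$ there is a radius $r_k>0$ with $\bigl(B(x,r_k)\setminus\{x\}\bigr)\cap G_k=\emptyset$; taking $r=\min\{r_1,\dots,r_n\}>0$ — here finiteness is essential — gives $\bigl(B(x,r)\setminus\{x\}\bigr)\cap\bigl(\bigcup_{k=1}^n G_k\bigr)=\emptyset$, so $x\notin\bigl(\bigcup_k G_k\bigr)'$. One subtlety worth a remark: closedness of the $F_k$ guarantees each $F_k^{(\alpha)}$ is again closed (an easy side induction: the derived set of a closed set is closed, and an arbitrary intersection of closed sets is closed), which keeps us inside the hypotheses at every stage, though the displayed identity for derived sets does not itself need closedness.

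For the limit step, let $\lambda\neq 0$ be a limit ordinal and assume the identity for all $\gamma<\lambda$. Then
\begin{equation*}
  \left(\bigcup_{k=1}^n F_k\right)^{(\lambda)}
  = \bigcap_{\gamma<\lambda}\left(\bigcup_{k=1}^n F_k\right)^{(\gamma)}
  = \bigcap_{\gamma<\lambda}\bigcup_{k=1}^n F_k^{(\gamma)},
\end{equation*}
using the inductive hypothesis in the second equality, and it remains to see this equals $\bigcup_{k=1}^n\bigcap_{\gamma<\lambda}F_k^{(\gamma)} = \bigcup_{k=1}^n F_k^{(\lambda)}$. The inclusion $\supseteq$ is trivial. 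For $\subseteq$, the key structural observation is that the family $\{F_k^{(\gamma)}\}_{\gamma<\lambda}$ is decreasing in $\gamma$ for each fixed $k$ (another routine transfinite induction: $A'\subseteq A$ for closed $A$, and the limit-stage intersections are nested), so for a point $x$ in $\bigcap_{\gamma<\lambda}\bigcup_k F_k^{(\gamma)}$ the set of indices $\{k : x\in F_k^{(\gamma)}\}$ is nonempty and nonincreasing in $\gamma$ among subsets of the finite set $\{1,\dots,n\}$; hence it stabilizes to some nonempty set, and any $k$ in that stable set satisfies $x\in F_k^{(\gamma)}$ for all $\gamma<\lambda$, i.e. $x\in F_k^{(\lambda)}$.

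The only place that genuinely uses the hypothesis $n<\infty$ is the successor step (taking the minimum of finitely many radii) and, in the form I gave it, the stabilization argument in the limit step (a decreasing chain of subsets of a finite set is eventually constant). I expect the limit step to be the main obstacle: the successor step is a standard point-set fact, whereas at limit ordinals one must be careful that an intersection of finite unions is a finite union, which is false in general and here rescued precisely by the monotonicity of Cantor–Bendixson derivatives together with finiteness of the index set. I would state the two monotonicity/closedness facts ($A$ closed $\Rightarrow A'$ closed and $A'\subseteq A$, extended through all ordinals) as a preliminary observation before running the main induction, to keep the three cases clean.
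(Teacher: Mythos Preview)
Your proof is correct. Note, however, that the paper does not actually prove this lemma: it simply states ``The next lemma and its proof can be found in \cite[Lemma 2.1]{BAS-AM}'' and moves on, so there is no in-paper argument to compare against. Your transfinite induction is the natural approach and almost certainly what the cited reference does as well: the successor step is the standard identity $\bigl(\bigcup_{k=1}^n G_k\bigr)'=\bigcup_{k=1}^n G_k'$ for finite unions, and the limit step is exactly the place where closedness earns its keep, via the monotonicity $F_k^{(\gamma_2)}\subseteq F_k^{(\gamma_1)}$ for $\gamma_1\le\gamma_2$ that lets you swap a decreasing intersection with a finite union. Your side observations (that each $F_k^{(\alpha)}$ remains closed, and that a decreasing chain of nonempty subsets of $\{1,\dots,n\}$ has nonempty intersection) are precisely the auxiliary facts needed, and your plan to isolate them beforehand is sound.
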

We close this section with a general topological result on the real line.
%%%%%%%%%%%%%%%%%%%%%%%%%%%%%%%%%%%%%%%%%%%%%%%
%%%%%%%%%%%%%%%%%%%%%%%%%%%%%%%%%%%%%%%%%%%%%%%
\begin{Proposition}\label{P5}
Every element of a perfect set $C \subset \mathbb{R}$ is a condensation point of $C$.
\end{Proposition}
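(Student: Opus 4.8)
The plan is to argue by contradiction. Suppose $C \subset \mathbb{R}$ is perfect and some $x \in C$ is \emph{not} a condensation point of $C$. Then there is an open neighborhood $U$ of $x$ such that $U \cap C$ is countable. Since $\mathbb{R}$ is second countable, we may shrink $U$ to a basic open interval with rational endpoints, so without loss of generality $U$ is an open interval and $x \in U$. Now consider the set $K := \overline{U} \cap C$ (or, more carefully, $C \cap [x-r, x+r]$ for a small closed interval contained in $U$): this is a countable closed subset of $\mathbb{R}$, and it is nonempty since $x$ belongs to it. The strategy is to show that a nonempty countable closed subset of $\mathbb{R}$ must have an isolated point, and then to transfer that isolated point back to $C$, contradicting $C = C'$.

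First I would establish the key sublemma: \emph{every nonempty countable closed set $K \subset \mathbb{R}$ has an isolated point (in the subspace topology of $K$, equivalently a point $p \in K$ with a neighborhood in $\mathbb{R}$ meeting $K$ only at $p$).} The cleanest route is the Cantor--Bendixson analysis already set up in the paper: iterating the derived-set operator $K \supset K' \supset K'' \supset \cdots$ through the ordinals, the process stabilizes at some countable ordinal $\alpha < \Omega$ (since $K$ is separable, or simply because the ordinals cannot inject into the countable set $K$), giving the perfect kernel $P := K^{(\alpha)}$. A nonempty perfect subset of $\mathbb{R}$ is uncountable (classical, and indeed one can invoke Proposition~\ref{P1}-type reasoning or the standard nested-interval Cantor scheme), so since $K$ is countable we must have $P = \emptyset$. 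Therefore the Cantor--Bendixson derivation of $K$ reaches $\emptyset$; taking the least ordinal $\beta$ with $K^{(\beta)} = \emptyset$, one checks $\beta$ cannot be a limit ordinal when $K \neq \emptyset$ (for a nested decreasing transfinite sequence of nonempty compact sets the intersection at a limit stage is still nonempty — here using that $K$ is bounded, hence compact, which we arrange by intersecting with a closed interval), so $\beta = \gamma + 1$ and $K^{(\gamma)}$ is a nonempty set with empty derived set, i.e. every point of $K^{(\gamma)}$ is isolated in $K^{(\gamma)}$; picking any such point and using $K^{(\gamma)} \subset K$ gives a point isolated in $K$.

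Having the sublemma, I finish as follows. Let $p$ be a point of $K = C \cap [x-r,x+r]$ isolated in $K$; so there is $\delta > 0$ with $(p-\delta, p+\delta) \cap K = \{p\}$. Shrinking $\delta$ so that $(p-\delta,p+\delta) \subset (x-r,x+r)$, we get $(p-\delta,p+\delta) \cap C = (p-\delta,p+\delta)\cap K = \{p\}$, so $p$ is isolated in $C$, i.e. $p \notin C'$. But $p \in C = C'$, a contradiction. Hence no point of $C$ fails to be a condensation point.

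The main obstacle is the sublemma, and within it the two places where compactness is needed: ensuring the transfinite derived-set sequence actually reaches $\emptyset$ (ruling out stabilization at a nonempty perfect kernel — this is where ``countable $\Rightarrow$ perfect kernel empty'' enters, ultimately resting on ``nonempty perfect $\Rightarrow$ uncountable''), and handling the limit-ordinal stages (a decreasing transfinite intersection of nonempty \emph{compact} sets is nonempty). Both are standard, and the compactness is free once we replace the open neighborhood $U$ by a closed bounded subinterval around $x$, which is the one technical precaution to take at the very start.
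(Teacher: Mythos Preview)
Your overall strategy---contradiction via the countable closed piece $K = C \cap [x-r, x+r]$, then extracting an isolated point of $K$ and transferring it to $C$---is sound, but the proof of your sublemma contains a genuine error. From $K^{(\gamma)} \neq \emptyset$ and $(K^{(\gamma)})' = \emptyset$ you correctly conclude that every point of $K^{(\gamma)}$ is isolated \emph{in $K^{(\gamma)}$}; the inference ``picking any such point and using $K^{(\gamma)} \subset K$ gives a point isolated in $K$'' is false. Take $K = \{0\} \cup \{1/n : n \in \mathbb{Z}^+\}$: here $\gamma = 1$, $K^{(1)} = \{0\}$, and $0$ is isolated in $K^{(1)}$ but is a limit point of $K$. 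The fix is immediate and makes the transfinite machinery unnecessary: if $K$ had \emph{no} isolated point then $K \subset K'$, and with $K' \subset K$ (closedness) this gives $K = K'$, so $K$ would be nonempty perfect, hence uncountable. Equivalently, once you know the Cantor--Bendixson sequence reaches $\emptyset$, already $K' \subsetneq K$, since $K' = K$ would force $K^{(\alpha)} = K$ for all $\alpha$. There is also a small endpoint wrinkle in the transfer step: shrinking so that $(p-\delta, p+\delta) \subset (x-r, x+r)$ presupposes $p$ lies in the \emph{open} interval, which fails if $p = x \pm r$; this disappears if you work with $C \cap (x-r, x+r)$ as a closed subset of the Polish space $(x-r, x+r)$.

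For comparison, the paper also argues by contradiction and also uses that $(C \cap [a-r, a+r])^{(\alpha)} = \emptyset$ for some countable ordinal $\alpha$, but it never extracts an isolated point. Instead it writes $C \subset (\mathbb{R} \setminus (a-r, a+r)) \cup (C \cap [a-r, a+r])$, applies Lemma~\ref{derivative} to take the $\alpha$-th derivative of this finite union term by term, and uses $C = C^{(\alpha)}$ to obtain $a \in \mathbb{R} \setminus (a-r, a+r)$, an immediate contradiction. Your route (once repaired) is more elementary, needing only ``nonempty perfect $\Rightarrow$ uncountable'' and no union--derivative lemma; the paper's route is slicker once Lemma~\ref{derivative} is available and sidesteps both the sublemma and the endpoint issue.
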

\begin{proof}
Let $C \subset \mathbb{R}$ be a perfect set.  We suppose, for the sake of contradiction, 
that there is $a \in C$ such that $a$ is not a condensation point of $C$.  Then, 
there exists $r > 0$ such that $C \cap (a-r,a+r)$ is countable.  Thus, 
$C \cap [a-r, a+r]$ is also countable.  Since $C$ is a perfect set, we have 
that $C$ is closed.  Hence, $C \cap [a-r, a+r]$ is a closed and countable 
subset of $\mathbb{R}$. By Theorem C of \cite{Cantor}, there exists a countable 
ordinal number $\alpha \in \Omega$ such that the $\alpha$-th derivative of 
$C \cap [a-r, a+r]$ is empty, namely, $(C \cap [a-r, a+r])^{(\alpha)} = \emptyset$.  
Moreover, we write
\begin{align*}
 C & = (C \setminus (a-r, a+r)) \biguplus (C \cap (a-r, a+r)) \\
   & \subset (\mathbb{R} \setminus (a-r, a+r)) \cup (C \cap [a-r, a+r]),
\end{align*}
where $\mathbb{R} \setminus (a-r, a+r) = (-\infty, a-r] \biguplus [a+r, +\infty)$ is 
a perfect set.  Using Lemma~\ref{derivative}, we see that
\begin{align*}
 a \in C  = C^{(\alpha)} & \subset \left [(\mathbb{R} \setminus (a-r, a+r)) 
                           \cup (C \cap [a-r, a+r]) \right]^{(\alpha)} \\
                         & = (\mathbb{R} \setminus (a-r, a+r))^{(\alpha)} 
                           \cup (C \cap [a-r, a+r])^{(\alpha)} \\
                         & =  \mathbb{R} \setminus (a-r, a+r),   
\end{align*}
which is a contradiction.  Therefore, every element of $C$ is a condensation 
point of $C$.
\end{proof}
%%%%%%%%%%%%%%%%%%%%%%%%%%%%%%%%%%%%%%%%%%%%%%%%%%%%%%%%%%%%%%%%%%%%%%%%%
%
%%%%%%%%%%%%%%%%%%%%%%%%%%%%%%%%%%%%%%%%%%%%%%%%%%%%%%%%%%%%%%%%%%%%%%%%%
\section{Existence of a Cantor set contained in the Diophantine Numbers}\label{SectionD}
First, let us consider the following representation of  the set of Liouville numbers, 
\begin{equation*}
     \mathbb{L}=\bigcap_{n\in\mathbb{N}}U_n,
\end{equation*}
where
\begin{equation*}
   U_n=\bigcup_{q=2}^{+\infty}\bigcup_{p\in\mathbb{Z}}
	     \left[\left(\frac{p}{q}-\frac{1}{q^n},\frac{p}{q}\right)
			 \biguplus \left(\frac{p}{q},\frac{p}{q}+\frac{1}{q^n}\right)\right] 			 
\end{equation*}
is an open and dense set of $\mathbb{R}$, for all $n\in\mathbb{N}$.  Then,
\begin{equation}\label{eq20}
  \mathbb{D}=\mathbb{R} \setminus \mathbb{L} = \mathbb{L}^c 
	          =\left(\bigcap_{n\in\mathbb{N}}U_n\right)^c
						=\bigcup_{n\in\mathbb{N}}U_n^c=\bigcup_{n\in\mathbb{N}}D_n,
\end{equation}
where $D_n=U_n^c$ is a closed and nowhere dense set, for all $n\in\mathbb{N}$. 
We come now to the main result of this section.
%%%%%%%%%%%%%%%%%%%%%%%%%%%%%%%%%%%%%%%%%%%%%%%
%%%%%%%%%%%%%%%%%%%%%%%%%%%%%%%%%%%%%%%%%%%%%%%
\begin{Theorem}\label{ThD}
There is a set $C\subset \mathbb{D}$ such that $C$ is a Cantor set.
\end{Theorem}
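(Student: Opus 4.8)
The plan is to extract the desired Cantor set directly from the decomposition $\mathbb{D}=\bigcup_{n\in\mathbb{N}}D_n$ displayed in \eqref{eq20}, with \emph{no explicit construction}, using Bendixson's Theorem. The starting observation is that $\mathbb{D}$ is uncountable: since $\lambda(\mathbb{D})=+\infty$, the set $\mathbb{D}$ cannot be countable. As a countable union of countable sets is countable and $\mathbb{D}=\bigcup_{n\in\mathbb{N}}D_n$, there must exist an index $n_0\in\mathbb{N}$ for which $D_{n_0}$ is uncountable.

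Next I would apply Bendixson's Theorem to the closed set $D_{n_0}$, writing $D_{n_0}=P\uplus N$ with $P\subset\mathbb{R}$ perfect and $N\subset\mathbb{R}$ countable. Since $D_{n_0}$ is uncountable while $N$ is countable, the perfect set $P$ is uncountable; in particular $P\neq\emptyset$.

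It then remains to check that $P$ is nowhere dense. Because $P\subset D_{n_0}$ and $D_{n_0}$ is closed, we have $\overline{P}\subset D_{n_0}$, whence $\text{int}(\overline{P})\subset\text{int}(D_{n_0})=\emptyset$, the last equality holding since $D_{n_0}$ is nowhere dense by the remark following \eqref{eq20}. Thus $P$ is a nonempty, perfect, nowhere dense subset of $\mathbb{R}$, i.e. a Cantor set, and $C:=P\subset D_{n_0}\subset\mathbb{D}$ is as required.

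There is no serious obstacle here; essentially all the work was already done in setting up \eqref{eq20}. The only two points needing a little care are: (i) the passage from $\lambda(\mathbb{D})=+\infty$ to the existence of an uncountable $D_{n_0}$, which is the pigeonhole/cardinality argument on the countable union above; and (ii) recording that the notion of Cantor set in use here demands only nonemptiness, perfectness and nowhere density, so that $P$ — which a priori need not even be bounded — already qualifies, with no compactness to verify.
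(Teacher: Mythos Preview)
Your argument is correct and follows essentially the same route as the paper: pick an uncountable $D_{n_0}$ from the decomposition \eqref{eq20}, apply Bendixson's Theorem to obtain a nonempty perfect subset, and inherit nowhere density from $D_{n_0}$. The only cosmetic difference is that the paper identifies the perfect part explicitly as the set of condensation points of $D_{n_0}$ and deduces uncountability of $\mathbb{D}$ from $\lambda(\mathbb{L})=0$ rather than $\lambda(\mathbb{D})=+\infty$.
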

\begin{proof}
Since $\lambda(\mathbb{L})=0$, $\mathbb{D}$ is an uncountable set. Using 
(\ref{eq20}), we see that there is a $k\in\mathbb{N}$ such that $D_k$ is 
an uncountable set.  Let $C$ be the set of all condensation points of $D_k$.
By Bendixson's Theorem, $D_k= (D_k \setminus C) \biguplus C$, where 
$D_k \setminus C$ is countable and $C$ is a perfect set.  We see that 
$C$ is an uncountable subset of $\mathbb{R}$.  Also, since $D_k$ is 
a nowhere dense set, we get that $C$ is also nowhere dense.  Hence, $C$ 
is a Cantor set contained in $\mathbb{D}$.
\end{proof}
%%%%%%%%%%%%%%%%%%%%%%%%%%%%%%%%%%%%%%%%%%%%%%%
%%%%%%%%%%%%%%%%%%%%%%%%%%%%%%%%%%%%%%%%%%%%%%%
%%%%%%%%%%%%%%%%%%%%%%%%%%%%%%%%%%%%%%%%%%%%%%%%%%%%%%%%%%%%%%%%%%%%%%%%%
%
%%%%%%%%%%%%%%%%%%%%%%%%%%%%%%%%%%%%%%%%%%%%%%%%%%%%%%%%%%%%%%%%%%%%%%%%%
\section{A necessary and sufficient condition for the existence of a Cantor set 
contained in a subset of the real line}\label{SectionC}
We begin this section with a preliminary result.
%%%%%%%%%%%%%%%%%%%%%%%%%%%%%%%%%%%%%%%%%%%%%%%
%%%%%%%%%%%%%%%%%%%%%%%%%%%%%%%%%%%%%%%%%%%%%%%
\begin{Lemma}\label{CP}
Every nonempty perfect set in $\mathbb{R}$ contains a Cantor set.
\end{Lemma}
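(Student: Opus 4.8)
The plan is to construct inside $C$ a \emph{Cantor scheme}, that is, a family $\{I_s\}$ of nondegenerate compact intervals indexed by finite binary words $s\in\bigcup_{n\ge 0}\{0,1\}^n$, such that, writing $|s|$ for the length of $s$: (a) $\text{int}(I_s)\cap C\neq\emptyset$; (b) $I_{s0}$ and $I_{s1}$ are disjoint and both contained in $\text{int}(I_s)$; and (c) $\text{diam}(I_s)\le 2^{-|s|}$. The construction proceeds by recursion on $|s|$. To begin, pick any $p\in C$ and set $I_\emptyset:=[p-1,p+1]$, so $p\in\text{int}(I_\emptyset)\cap C$. For the recursive step, suppose $I_s$ is given together with a point $p\in\text{int}(I_s)\cap C$; since $C=C'$, the point $p$ is an accumulation point of $C$, so there is $p'\in\text{int}(I_s)\cap C$ with $p'\neq p$. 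Choose $\delta>0$ small enough that the closed $\delta$-balls about $p$ and about $p'$ are disjoint, are both contained in $\text{int}(I_s)$, and have diameter at most $2^{-|s|-1}$, and let $I_{s0}$ and $I_{s1}$ be these two balls, ordered from left to right. Each child contains a point of $C$ in its interior (namely $p$ or $p'$), so the recursion continues. It is immediate that $I_{s0}\cup I_{s1}\subset\text{int}(I_s)$, and a straightforward induction on $n$ shows that $\{I_s:|s|=n\}$ is a pairwise disjoint family for every $n$.

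Next I would define $\varphi\colon\{0,1\}^{\mathbb N}\to\mathbb R$ by letting $\varphi(\sigma)$ be the unique element of $\bigcap_{n\ge 0}I_{\sigma|n}$, where $\sigma|n:=(\sigma_0,\dots,\sigma_{n-1})$; this is well defined because the $I_{\sigma|n}$ form a decreasing chain of nonempty compact intervals whose diameters tend to $0$. I would then verify the three properties that make $\varphi$ a homeomorphism onto a Cantor set lying in $C$. First, $\varphi(\sigma)\in C$: the center of $I_{\sigma|n}$ belongs to $C$ and lies within $2^{-n}$ of $\varphi(\sigma)$, so $\varphi(\sigma)$ is a limit of points of $C$, and $C$ is closed. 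Second, $\varphi$ is injective: if $\sigma\neq\tau$ and $n$ is least with $\sigma_n\neq\tau_n$, then $I_{\sigma|(n+1)}$ and $I_{\tau|(n+1)}$ are disjoint while containing $\varphi(\sigma)$ and $\varphi(\tau)$ respectively. Third, $\varphi$ is continuous: if two sequences agree on their first $n$ coordinates, their images both lie in $I_{\sigma|n}$, whose diameter is at most $2^{-n}$. Since $\{0,1\}^{\mathbb N}$ is compact and $\mathbb R$ is Hausdorff, $\varphi$ is a homeomorphism onto $K:=\varphi(\{0,1\}^{\mathbb N})$, which is therefore compact, hence closed in $\mathbb R$, and satisfies $K\subset C$.

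It then remains to check that $K$ is a Cantor set in the sense of the paper. It is nonempty, and being compact it is closed, so $K'\subset\overline K=K$; moreover it has no isolated point, since an isolated point of $K$ would correspond under the homeomorphism $\varphi$ to an isolated point of $\{0,1\}^{\mathbb N}$, and the product topology on $\{0,1\}^{\mathbb N}$ has none. Hence $K=K'$, i.e. $K$ is perfect. Finally, $K$ is nowhere dense: $\overline K=K$, and if some nondegenerate open interval $(a,b)$ satisfied $(a,b)\subset K$, then picking $n$ with $2^{-n}<b-a$ and using $K\subset\bigcup_{|s|=n}I_s$ we would have the connected set $(a,b)$ contained in a finite pairwise disjoint union of compact intervals each of diameter $\le 2^{-n}<b-a$, which is impossible. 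Thus $K\subset C$ is a nonempty, perfect, nowhere dense subset of $\mathbb R$, that is, a Cantor set, which proves the lemma.

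I expect the only genuinely delicate point to be the bookkeeping in the recursive step, namely arranging simultaneously the disjointness of the two children, their containment in the interior of the parent, and the halving of the diameter; but this is routine, because $\text{int}(I_s)$ is a nonempty open set containing at least two distinct points of $C$, so a sufficiently small $\delta$ works, and everything after the construction is a standard verification. (One could alternatively route part of the argument through Proposition~\ref{P5} and Proposition~\ref{P0}, but the direct construction above keeps the proof self-contained.)
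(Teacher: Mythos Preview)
Your argument is correct (modulo the harmless slip that $I_\emptyset=[p-1,p+1]$ has diameter $2>2^{0}$; since the nowhere-denseness argument only uses the diameter bound for large $|s|$, nothing is affected). The Cantor-scheme construction, the verification that $\varphi$ is a homeomorphism onto a compact $K\subset C$, and the checks that $K$ is perfect and nowhere dense are all sound.

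However, the paper's proof takes a much shorter and rather different route: it simply splits into two cases according to whether $\operatorname{int}(P)=\emptyset$. If $\operatorname{int}(P)=\emptyset$, then $P$ itself is already nonempty, perfect, and (being closed with empty interior) nowhere dense, so $P$ is a Cantor set and there is nothing to construct. If instead some $a\in\operatorname{int}(P)$, pick $r>0$ with $(a-r,a+r)\subset P$ and drop the standard triadic Cantor set into $[a-\tfrac{r}{3},a+\tfrac{r}{3}]\subset P$. Your approach is more laborious but also more self-contained: it does not appeal to the triadic Cantor set as a black box, it produces a \emph{compact} Cantor subset even when $P$ is unbounded, and it is the argument that generalises cleanly to arbitrary Polish spaces. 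The paper's dichotomy, on the other hand, exploits the specific fact that in $\mathbb{R}$ a nonempty open set already contains an explicit Cantor set, which makes the proof essentially two lines.
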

\begin{proof}
Let $P\subset\mathbb{R}$ be a nonempty perfect set. There are two cases 
to consider.
\begin{itemize}
 \item If $\text{int}(P)=\emptyset$, then $P$ is a Cantor set.
 \item If $a\in \text{int}(P)$, there is $r>0$ such that $(a-r,a+r)\subset P$.
   Let $\widetilde{C}$ be the usual triadic Cantor set in the closed interval 
	 $\left[a-\frac{r}{3},a+\frac{r}{3}\right]$. Then, 
   \begin{equation*}
     \widetilde{C}\subset \left[a-\frac{r}{3},a+\frac{r}{3}\right]
	    \subset (a-r,a+r)
			\subset P.
   \end{equation*}
\end{itemize}
\end{proof}
%%%%%%%%%%%%%%%%%%%%%%%%%%%%%%%%%%%%%%%%%%%%%%%
%%%%%%%%%%%%%%%%%%%%%%%%%%%%%%%%%%%%%%%%%%%%%%%
Finally, we show the purpose of this section.
\begin{Theorem}
$X\subset \mathbb{R}$ contains a Cantor set if and only if 
$X$ contains a closed and uncountable subset of $\mathbb{R}$.
\end{Theorem}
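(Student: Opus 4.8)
The plan is to prove both implications of the biconditional, with the forward direction being essentially immediate and the reverse direction requiring the structural results established earlier in the excerpt.

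For the forward implication, suppose $X \subset \mathbb{R}$ contains a Cantor set $C$. By definition, $C$ is perfect, hence closed, and being perfect and nonempty it has no isolated points; a standard argument (or Proposition~\ref{P5} together with the fact that $\mathbb{R}$ is second countable) shows $C$ is uncountable. Thus $C$ is itself a closed and uncountable subset of $\mathbb{R}$ contained in $X$, and we are done. Actually, the cleanest route is to note that any nonempty perfect subset of $\mathbb{R}$ is uncountable: by Proposition~\ref{P5} every point of $C$ is a condensation point, so in particular $C$ cannot be countable. This gives the easy direction with essentially no work.

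For the reverse implication, suppose $X$ contains a closed and uncountable subset $F \subset \mathbb{R}$. The idea is to extract a nonempty perfect set sitting inside $F$, and then apply Lemma~\ref{CP} to produce a Cantor set inside that perfect set, hence inside $F \subset X$. To extract the perfect set, I would invoke Bendixson's Theorem (already cited and used in the proof of Theorem~\ref{ThD}): every closed subset of $\mathbb{R}$ decomposes as a disjoint union $F = P \biguplus N$ where $P$ is perfect and $N$ is countable. Since $F$ is uncountable and $N$ is countable, $P$ must be nonempty (indeed uncountable). Concretely, $P$ can be taken to be the set of condensation points of $F$, exactly as in the proof of Theorem~\ref{ThD}. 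Then $P \subset F \subset X$ is a nonempty perfect set, and Lemma~\ref{CP} yields a Cantor set $\widetilde{C} \subset P \subset X$.

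The main (and only) subtlety is making sure the appeal to Bendixson's Theorem is legitimate in this generality and correctly yields a \emph{nonempty} perfect part: one must observe that because $F$ is uncountable while the countable "scattered" part can account for only countably many points, the perfect part is forced to be nonempty. This is the same bookkeeping already carried out in Section~\ref{SectionD}, so no new difficulty arises; the rest is a direct chain of inclusions $\widetilde{C} \subset P \subset F \subset X$ combined with Lemma~\ref{CP}. Assembling the two implications completes the proof.
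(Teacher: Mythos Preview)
Your proposal is correct and follows essentially the same route as the paper: Bendixson's Theorem applied to the closed uncountable $F$ yields a nonempty perfect subset, and Lemma~\ref{CP} then produces the Cantor set, while the forward direction is immediate since Cantor sets are closed and uncountable. Your only addition is an explicit justification (via Proposition~\ref{P5}) of the uncountability of a Cantor set, which the paper simply asserts.
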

\begin{proof}
Let $F\subset X$ be a closed and uncountable subset of the real line. 
By Bendixson's Theorem, there is a perfect and uncountable set $P\subset F$. 
By Lemma~\ref{CP}, there is a Cantor set $K$, such that 
$K\subset P \subset F \subset X$.  Reciprocally, since all Cantor sets are closed  
and uncountable, the theorem is proved.
\end{proof}
%%%%%%%%%%%%%%%%%%%%%%%%%%%%%%%%%%%%%%%%%%%%%%%
%%%%%%%%%%%%%%%%%%%%%%%%%%%%%%%%%%%%%%%%%%%%%%%
%%%%%%%%%%%%%%%%%%%%%%%%%%%%%%%%%%%%%%%%%%%%%%%%%%%%%%%%%%%%%%%%%%%%%%%%%
%
%%%%%%%%%%%%%%%%%%%%%%%%%%%%%%%%%%%%%%%%%%%%%%%%%%%%%%%%%%%%%%%%%%%%%%%%%

\vspace{1cm}

\end{document}